\title{L-space knots are fibered and strongly quasipositive}
\author[John A. Baldwin]{John A. Baldwin}
\address{Department of Mathematics \\ Boston College}
\email{john.baldwin@bc.edu}
\author[Steven Sivek]{Steven Sivek}
\address{Department of Mathematics\\Imperial College London}
\email{s.sivek@imperial.ac.uk}
\def\Q{{\mathbb{Q}}}
\def\CP{{\mathbb{CP}}}
\def\cpbar{\overline{\CP}^2}
\newcommand\hf{\widehat{\mathit{HF}}}
\newcommand\hfp{\mathit{HF}^+}
\newcommand\hfred{\mathit{HF}_{\mathrm{red}}}
\DeclareMathOperator{\coker}{coker}
\newcommand\cT{\mathcal{T}}
\newcommand\Sc{\text{Spin}^c}
\newcommand\spc{\mathfrak{s}}
\newcommand\spt{\mathfrak{t}}
\newcommand\Span{\mathrm{Span}}
\newcommand\Z{\mathbb{Z}}
\newcommand\F{\mathbb{F}}
\newcommand\Img{{\rm Im}}
\newcommand\ssl{\mathit{sl}}
\newcommand\ttb{\mathit{tb}}
\newcommand\maxsl{\overline{\ssl}}
\newcommand{\longcomment}[2]{#2}
\DeclareFontFamily{U}{mathx}{\hyphenchar\font45}
\DeclareFontShape{U}{mathx}{m}{n}{
      <5> <6> <7> <8> <9> <10>
      <10.95> <12> <14.4> <17.28> <20.74> <24.88>
      mathx10
      }{}
\DeclareSymbolFont{mathx}{U}{mathx}{m}{n}
\DeclareMathAccent{\widecheck}{0}{mathx}{"71}
\newcommand{\HMto}{\widecheck{\mathit{HM}}}
    \def\HMto{%
       \setbox0=\hbox{$\widehat{\mathit{HM}}$}
       \setbox1=\hbox{$\mathit{HM}$}
       \dimen0=1.1\ht0
       \advance\dimen0 by 1.17\ht1
       \smash{\mskip2mu\raise\dimen0\rlap{%
          \begin{turn}{180}
              {$\widehat{\phantom{\mathit{HM}}}$}
           \end{turn}} \mskip-2mu    
                \mathit{HM}
    }{\vphantom{\widehat{\mathit{HM}}}}{}}
    \newcommand*\oline[1]{%
  \vbox{%
    \hrule height 0.35pt%                  % Line above with certain width
    \kern0.1ex%                          % Distance between line and content
    \hbox{%
      \kern-0.0em%                        % Distance between content and left side of box, negative values for lines shorter than content
      \ifmmode#1\else\ensuremath{#1}\fi%  % The content, typeset in dependence of mode
      \kern-0.1em%                        % Distance between content and left side of box, negative values for lines shorter than content
    }% end of hbox
  }% end of vbox
}
\newtheorem{theorem}{Theorem}
\newtheorem{lemma}[theorem]{Lemma}
\newtheorem{corollary}[theorem]{Corollary}
\newtheorem{proposition}[theorem]{Proposition}
\theoremstyle{definition}
\newtheorem{remark}[theorem]{Remark}
\newtheorem*{rep@thm}{\rep@title}
\newcommand{\newreptheorem}[2]{%
\newenvironment{rep#1}[1][0,0]{%
\def\rep@title{#2##1}%
\begin{rep@thm}}%
{\end{rep@thm}}}
\tikzset{every picture/.style=thick}
\begin{document}

\begin{abstract}
We give a new, conceptually simpler proof of the fact that knots in $S^3$ with positive L-space surgeries are fibered and strongly quasipositive.  Our motivation for doing so is that this new proof uses comparatively little Heegaard Floer-specific machinery and can thus be translated to other forms of Floer homology. We  carried this out for instanton Floer homology in our recent article ``Instantons and L-space surgeries" \cite{bs-lspace}, and used it to generalize Kronheimer and Mrowka's results on $SU(2)$ representations of fundamental groups of Dehn surgeries.
\end{abstract}

\maketitle

The hat version $\hf(Y)$ of Heegaard Floer homology, which we will take with coefficients in $\F = \Z/2\Z$ throughout, carries an absolute $\Z/2\Z$ grading such that
\begin{equation} \label{eqn:chi-hf}
\chi(\hf(Y,\spc)) = \begin{cases} 1 & b_1(Y) = 0 \\ 0 & b_1(Y) \geq 1 \end{cases}
\end{equation}
for all $\spc \in \Sc(Y)$ \cite[Proposition~5.1]{osz-properties}.  Thus for any rational homology $3$-sphere $Y$, we have
\[ \dim \hf(Y) \geq \chi(\hf(Y)) = |H_1(Y;\Z)|. \] 
A rational homology $3$-sphere $Y$ is an \emph{L-space} if
\[ \dim \hf(Y) = |H_1(Y;\Z)|. \]
 
\begin{theorem}[\cite{osz-lens,ni-hfk,hedden-positivity,osz-rational}]
\label{thm:main}
If $S^3_r(K)$ is an L-space for some rational slope $r>0$, then $K$ is fibered and strongly quasipositive, and $r \geq 2g(K)-1$.  %Moreover, if $K$ has a positive integral L-space surgery then the set of all integral L-space slopes for $K$ is $[2g(K)-1,\infty) \cap \Z$.
\end{theorem}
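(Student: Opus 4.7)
The plan is to extract all three conclusions of the theorem — the slope bound $r \geq 2g(K)-1$, fiberedness of $K$, and strong quasipositivity of $K$ — from the L-space hypothesis while staying as close as possible to the hat-level Heegaard Floer invariants and the knot filtration, avoiding $\cfki$. Keeping the argument at the hat level is what makes it adaptable to other Floer homologies, which is the stated motivation.

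First, I would pass from an arbitrary positive rational L-space slope to large integer slopes: a standard argument using the surgery exact triangle shows that if $S^3_r(K)$ is an L-space for some rational $r>0$, then $S^3_N(K)$ is an L-space for every sufficiently large integer $N$. For such $N$, the large surgery formula identifies $\hf(S^3_N(K), \spc_s)$ with the homology of a hat-level subquotient $\hat{A}_s$ of $\cfk(K)$. The L-space property, combined with the fact that each $\Sc$-summand of $\hf$ has Euler characteristic $\pm 1$, forces each $\hat{A}_s$ to have total homology $\F$. Unpacking this in the top Alexander grading $s = g(K)$ yields simultaneously $\tau(K) = g(K)$ and $\hfk(K, g(K)) \cong \F$. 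Combined with the general inequality $r \geq 2\tau(K)-1$ for positive L-space slopes — itself a consequence of the large surgery formula — this produces the slope bound $r \geq 2g(K)-1$.

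Second, fiberedness of $K$ follows from the Ghiggini--Ni theorem, which asserts that any knot $K \subset S^3$ with $\hfk(K, g(K)) \cong \F$ is fibered; I would take this as an external input. For strong quasipositivity, I would realize the large integer L-space surgery as contact $(+1)$-surgery on a Legendrian representative $\Lambda$ of $K$ with $\ttb(\Lambda) = 2g(K)-1$. For fibered knots, the existence of such a $\Lambda$ is equivalent to $K$ being strongly quasipositive, by a theorem of Hedden. The L-space condition forces the Heegaard Floer contact invariant of the surgered manifold to be non-zero, and naturality of the contact invariant under Legendrian surgery then forces the contact structure supported by the fibered open book of $K$ to be Stein-fillable, hence the standard tight structure on $S^3$.

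The main technical obstacle will be the first step: extracting $\hfk(K, g(K)) \cong \F$ from the L-space condition using only hat-level invariants. A global dimension count on $\hf(S^3_N(K))$ alone is not enough — one must distinguish $\Sc$ structures — so the large surgery formula must be invoked at the level of individual $\hat{A}_s$ complexes, together with the constraint that each such complex has one-dimensional homology. The contact-theoretic step is more conceptual than technical, once the fibered structure and the Legendrian representative with $\ttb = 2g(K)-1$ are in hand.
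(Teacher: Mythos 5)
Your outline is essentially the classical proof of this theorem (Ozsv\'ath--Szab\'o, Ni, Hedden, Rasmussen), not a variant of the paper's argument, and it rests on exactly the machinery the paper is written to avoid. The large integer surgery formula and the subquotient complexes $\hat{A}_s$ of $\cfk(K)$ are the first two items on the paper's list of tools that do not exist in instanton (or, sufficiently, monopole) Floer homology; so the claim that your route ``stays at the hat level, avoiding $\cfki$'' and is therefore portable is backwards. The paper instead works entirely with $0$-surgery and the tower of cobordisms $S^3 \xrightarrow{X_0} S^3_0(K) \xrightarrow{W_1} S^3_1(K) \to \cdots$: the $(\infty,0,1)$-triangle, the blow-up formula, the adjunction inequality, and the $\Sc$-decomposition of the $2$-handle maps combine (Lemma~\ref{lem:compute-F_V} through Proposition~\ref{prop:hf-zero-surgery}) to show each nonzero $\hf(S^3_0(K),\spc_j)$ is $\F_0\oplus\F_1$; fiberedness then comes from Ni's $0$-surgery criterion $\hfp(S^3_0(K),\spc_{g-1})\cong\F$ rather than from $\hfk(K,g)\cong\F$; strong quasipositivity comes from identifying $c^+(\xi_K)$ with the image of the generator under $\hfp(-S^3_0(K),\spc_{1-g})\to\hfp(-S^3)$; and the slope bound comes \emph{afterwards}, from strong quasipositivity, via Plamenevskaya's linear independence of the contact invariants of $n+2g-1$ Stein-fillable structures on $S^3_{-n}(K)$ and a $\ker(U)$ dimension count. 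Note also that the paper must separately handle genus~$1$ and non-integral slopes by a cabling trick (Proposition~\ref{prop:genus-1-reduction}); your reduction to large integer slopes handles the second issue but you should say why $\hfk(K,g)\cong\F$ detects fiberedness when $g=1$ (Ghiggini's theorem, which is a separate and harder input than Ni's).

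Beyond the difference in route, your strong quasipositivity step has a genuine gap. First, a large integer surgery $S^3_N(K)$ with $N>2g$ cannot be realized as contact $(+1)$-surgery on a single Legendrian representative of $K$, since that would require $\ttb(\Lambda)=N-1>2g-1\geq \maxtb(K)$ by the slice-Bennequin inequality. Second, the existence of a Legendrian representative with $\maxsl=2g-1$ (equivalently, for fibered knots, strong quasipositivity) is what you are trying to prove, so invoking it as a hypothesis is circular. Third, the surgery you describe takes place in the \emph{standard} tight contact structure on $S^3$, and nothing in your sketch connects the resulting contact manifold to the contact structure $\xi_K$ supported by the open book of $K$; nonvanishing of a contact invariant downstairs would not, by itself, say anything about $\xi_K$. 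The working versions of this step are either Hedden's (identify $c(\xi_K)$ with the generator of the top filtration level of $\cfk(K)$ and use $\tau(K)=g(K)$, which requires the knot filtration description of the contact class) or the paper's ($c^+(\xi_K)$ as the image of the $0$-surgery cobordism map, which requires $y_{g-1}\neq 0$, i.e.\ Proposition~\ref{prop:hf-zero-surgery}). You need to commit to one of these; the contact $(+1)$-surgery argument as stated does not close. Finally, the slope bound $r\geq 2g(K)-1$ for \emph{all} positive L-space slopes does not follow from the large surgery formula alone (which only governs slopes $\geq 2g-1$ to begin with); it requires the rational surgery mapping cone, or an argument like the paper's Proposition~\ref{prop:l-space-range}.
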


All proofs of Theorem \ref{thm:main} in the literature use at least some of the following tools: the doubly-filtered Heegaard Floer complex associated to a knot, the large integer surgery formula, the $(\infty,0,n)$-surgery exact triangle for $n>1$, and the $\Sc$ decomposition of $\hf(Y)$ for $Y$ a rational homology sphere.  This presents a major difficulty if one wishes to port this theorem to the instanton Floer setting, where none of this machinery is available. 

\begin{remark} A primary motivation for proving an analogue of Theorem \ref{thm:main} in the instanton Floer setting in particular is that such an analogue can be used to prove new results about the $SU(2)$ representation varieties of fundamental groups of $3$-manifolds obtained by Dehn surgeries on knots in the $3$-sphere, about which relatively little is known; see \cite{bs-lspace}.
\end{remark}

\begin{remark}
Some of the structure mentioned above is known to exist in monopole Floer homology, though not enough of it to translate previous proofs of Theorem \ref{thm:main} to that setting. The new proof of Theorem \ref{thm:main} presented in this article (see below) \emph{can} be adapted directly to monopole Floer homology, with the caveat in Remark \ref{rmk:tightknot}, to give a proof of the monopole Floer analogue of Theorem \ref{thm:main} which does not rely on an isomorphism between monopole Floer homology and Heegaard Floer homology.%\footnote{Kronheimer, Mrowka, Ozsv{\'a}th, and Szab{\'o} use monopole Floer homology to prove the $2g(K)-1$ bound on \emph{lens} space surgery slopes in \cite{kmos}, but that proof is incorrect. The error is in their proof of Theorem 8.2, in which they use the assertion that the mod 2 grading shifts of the maps in the surgery exact sequence \[\HMtoc(S^3)\to \HMtoc(S^3_0(K))\to\HMtoc(S^3_1(K))\] are zero; this is not true.}
\end{remark}
 
The goal of this note is to give a proof of Theorem~\ref{thm:main} using instead: the $(\infty,0,1)$-surgery exact triangle, the blow-up formula for cobordism maps, the adjunction inequality for cobordism maps,  the $\Sc$ decomposition of the  maps associated to $2$-handle cobordisms, and Ozsv{\'a}th and Szab{\'o}'s  description of the contact invariant $c^+(\xi)$ as the image of a certain class under the $2$-handle cobordism map \begin{equation*}\label{eqn:0surg}\hfp(-S^3_0(K))\to\hfp(-S^3),\end{equation*} where $K$ is a fibered knot supporting the contact structure $\xi$ on $S^3$. The first four of these tools will be used to show that an L-space knot is fibered, while the last will be used to prove that an L-space knot supports the tight contact structure on $S^3$ and is therefore strongly quasipositive, by Hedden \cite{hedden-positivity}. Strong quasipositivity will then be used to prove the $ 2g(K)-1$ bound on L-space surgery slopes.
 
\begin{remark} \label{rmk:tightknot}Ozsv{\'a}th and Szab{\'o} do not prove that $c^+(\xi)$ is well-defined (and hence that it certifies that $\xi$ is tight) directly from its description in terms of the cobordism map associated to $0$-surgery on the supporting fibered knot (and it is unclear how to do so---this is an interesting problem!). They instead use the knot filtration for this, which poses a challenge for translating the strong quasipositivity argument presented here to framed instanton homology. We discovered \cite{bs-lspace} a workaround in that setting, however, by a significantly more complicated argument which involves cabling and our framed instanton contact invariant \cite{bs-instanton}. We then used that instanton contact class to prove the $r \geq 2g(K){-}1$ bound, in a manner very similar to the proof of Proposition \ref{prop:l-space-range} here (also using results from \cite{bs-stein} and \cite{lpcs}). The same difficulties and solutions apply in monopole Floer homology, using our contact invariant from \cite{bsSHM}.\footnote{It is reasonable to expect that Kronheimer and Mrowka's monopole Floer contact class can be characterized in terms of the $0$-surgery cobordism map as above, based on Echeverria's work \cite{echeverria}, which would allow one to circumvent the more complicated strong quasipositivity argument we have in mind.}
\end{remark}

In our proof of Theorem~\ref{thm:main}, we will assume that $K$ has genus at least 2 everywhere until Proposition~\ref{prop:l-space-range}, so that we can apply Theorem~\ref{thm:ni-fibered} below to detect whether $S^3_0(K)$ is fibered.  The following proposition uses a cabling trick (see Figure~\ref{fig:cabling}) to show that we can still conclude Theorem~\ref{thm:main} in full generality, and moreover that it suffices to prove the theorem for integral slopes.

\begin{proposition} \label{prop:genus-1-reduction}
If Theorem~\ref{thm:main} holds for all knots of genus at least 2 and integral slopes $r \in \Z$, then it is also true for knots of genus 1 and $r \in \Q$.
\end{proposition}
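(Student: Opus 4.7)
The plan is to reduce the genus-one rational-slope case to the genus-at-least-two integer-slope case by a cabling construction. Suppose $K$ has $g(K) = 1$ and $S^3_{p/q}(K)$ is an L-space, where $p,q>0$ and $\gcd(p,q)=1$. First assume $q \ge 2$, and consider the $(p,q)$-cable $C = K_{p,q}$. The satellite genus formula gives
\[
  g(C) \;=\; q\,g(K) + \tfrac{(p-1)(q-1)}{2} \;=\; q + \tfrac{(p-1)(q-1)}{2} \;\ge\; 2,
\]
so $C$ satisfies the genus hypothesis of the version of Theorem~\ref{thm:main} we are assuming.

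Next I would check that the cable has an integer L-space surgery. The classical cabling-surgery formula of Moser and Gordon gives a reducible identification
\[
  S^3_{pq}(C) \;\cong\; S^3_{p/q}(K)\,\#\,L(q,p),
\]
which is a connected sum of L-spaces (the lens space $L(q,p)$ and the hypothesized L-space $S^3_{p/q}(K)$), hence an L-space. Applying the assumed Theorem~\ref{thm:main} to the pair $(C, pq)$ yields that $C$ is fibered, strongly quasipositive, and satisfies $pq \ge 2g(C) - 1$.

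I would then descend these properties from the cable to $K$. Fiberedness of a cable passes to its companion by Stallings' criterion applied to the natural decomposition of the cable exterior (or by a direct Seifert surface construction). For positive cables ($p,q>0$), strong quasipositivity of $K_{p,q}$ is equivalent to that of $K$, by a theorem of Hedden. The slope inequality simplifies via $2g(C)-1 = 2q + (p-1)(q-1) - 1 = pq + q - p$ to $p \ge q$, i.e.\ $r = p/q \ge 1 = 2g(K) - 1$, as required.

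The main obstacle is the boundary case $q = 1$, where the $(p,1)$-cable is isotopic to $K$ itself and the construction above is vacuous. Handling this requires either producing an auxiliary non-integer L-space surgery slope for $K$ (so that the $q \ge 2$ reduction above applies) by a separate surgery-triangle argument, or working with a modified cabling pattern as suggested by Figure~\ref{fig:cabling} that manufactures a genuine genus-at-least-two satellite even from an integer input slope. This is the delicate step of the proposition.
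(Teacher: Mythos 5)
Your treatment of the non-integral case $q \ge 2$ is essentially identical to the paper's: the same Gordon identification $S^3_{pq}(K_{p,q}) \cong S^3_{p/q}(K) \# L(q,p)$, the same cable genus formula forcing $g(K_{p,q}) \ge q \ge 2$, the same descent of fiberedness to the companion, and the same appeal to Hedden for strong quasipositivity (the paper phrases it as: a fibered knot is strongly quasipositive iff its open book supports the tight contact structure, and $K$ and $K_{p,q}$ support the same contact structure). Your algebra for the slope bound $p/q \ge 2g(K)-1$ agrees with equation \eqref{eq:cable-genus-bound}. One small remark: the paper runs this part of the argument for an arbitrary nontrivial knot $K$, not just genus one, since the same cabling reduction disposes of all non-integral slopes at once; and it sets aside the $r \ge 2g(K)-1$ claim entirely, since Proposition~\ref{prop:l-space-range} proves that bound with no genus hypothesis.

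The genuine gap is the case you flag at the end: $g(K)=1$ with an integral L-space slope $n>0$, where cabling with $q=1$ gives you nothing. You correctly identify that one way out is to manufacture a non-integral L-space slope, but you do not actually produce one, and this is not an optional refinement --- it is the only case in which the hypothesis of the proposition (genus at least $2$) genuinely fails to apply after cabling, so the proposition is not proved without it. The paper closes this with a one-line citation: by \cite[Proposition~2.1]{osz-lens} (a consequence of the surgery exact triangle), if $S^3_n(K)$ is an L-space for a positive integer $n$ then $S^3_s(K)$ is an L-space for every rational $s \ge n$; in particular $S^3_{(2n+1)/2}(K)$ is an L-space, and since $\tfrac{2n+1}{2} \notin \Z$ the $q \ge 2$ argument now applies to give fiberedness and strong quasipositivity of $K$ (the bound $n \ge 2g(K)-1 = 1$ being automatic). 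Your ``separate surgery-triangle argument'' is exactly this, but until you invoke or prove that upward-closedness of integral-to-rational L-space slopes, the proof is incomplete.
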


\begin{proof}
The claim in Theorem~\ref{thm:main} about the set of positive integral L-space slopes is proved in Proposition~\ref{prop:l-space-range} without any restrictions on $g(K)$, so we will only address the other claims of Theorem~\ref{thm:main} here.

We suppose first that $K$ is an arbitrary nontrivial knot and that some surgery on $K$ of non-integral slope $r > 0$ is an L-space.  We write $r = \frac{p}{q}$ for some positive integers $p$ and $q \geq 2$.  By applying \cite[Corollary~7.3]{gordon}, we see that
\[ S^3_{pq}(K_{p,q}) \cong S^3_{p/q}(K) \# S^3_{q/p}(U) \]
where $K_{p,q}$ is the cable represented by the peripheral element $\mu^p\lambda^q$ in $\pi_1(\partial N(K))$.  The two summands on the right are both L-spaces, hence the K\"unneth formula for $\hf$ says that $pq$-surgery on $K_{p,q}$ is also an L-space.  We observe that
\[ g(K_{p,q}) = \frac{(p-1)(q-1)}{2} + q\cdot g(K), \]
which implies that $g(K_{p,q}) \geq q \geq 2$ and which is also equivalent to
\begin{equation} \label{eq:cable-genus-bound}
2g(K_{p,q}) - 1 = pq + q\left(2g(K)-1 - \frac{p}{q}\right).
\end{equation}

We can now apply the assumed case of Theorem~\ref{thm:main} to $pq$-surgery on $K_{p,q}$ to conclude that $pq \geq 2g(K_{p,q})-1$, hence $r = \frac{p}{q} \geq 2g(K)-1$ by \eqref{eq:cable-genus-bound}; and that $K_{p,q}$ is fibered and strongly quasipositive.  Since $K_{p,q}$ is fibered, $K$ must be as well.  The strong quasipositivity of $K$ then follows from two facts: (1) a fibered knot is strongly quasipositive if and only if its corresponding open book decomposition supports the tight contact structure on $S^3$ \cite{hedden-positivity}, and (2) the knots $K$ and $K_{p,q}$ support the same contact structure \cite{hedden-cabling}.

This concludes the proof in all cases except when $K$ has genus 1 and $r$ is a positive integer, say $r=n$.  In this case it is automatic that $r \geq 2g(K)-1 = 1$.  Moreover, repeated application of \cite[Proposition~2.1]{osz-lens}, which follows easily from the surgery exact triangle for $\hf$, says that $S^3_{(2n+1)/2}(K)$ is an L-space.  (In fact, it says that $S^3_s(K)$ is an L-space for all rational $s \geq n$.)  Since $\frac{2n+1}{2} \not\in \Z$, the fiberedness and strong quasipositivity of $K$ follow exactly as above.
\begin{figure}[t]
\includegraphics[scale=0.6]{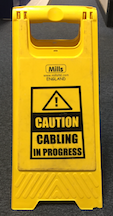}
\caption{An outline of the proof of Proposition~\ref{prop:genus-1-reduction}.} \label{fig:cabling}
\end{figure}
\end{proof}
 
We will suppose henceforth that $K\subset S^3$ is a knot of genus $g\geq 2$.

Let $\Sigma_0$ denote the genus $g$ surface in $S^3_0(K)$ obtained by capping off a minimal genus Seifert surface for $K$. Let $\spc_i$ be the unique $\Sc$ structure on $S^3_0(K)$ satisfying
\[\langle c_1(\spc_i),[\Sigma_0]\rangle = 2i.\]
The adjunction inequality \cite[Theorem~7.1]{osz-properties} implies that
\[\hfp(S^3_0(K),\spc_i)=\hf(S^3_0(K),\spc_i)=0\]
for $|i|>g-1$. Moreover,
\[\hfp(S^3_0(K),\spc_{g-1})\neq 0 \]
\cite{osz-genus}, and Ni proved \cite{ni-hfk} (see also \cite[Corollary~4.5]{osz-hfk}) the following.

\begin{theorem} \label{thm:ni-fibered}
$K$ is fibered if and only if $\hfp(S^3_0(K),\spc_{g-1})\cong \F$.
\end{theorem}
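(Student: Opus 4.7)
The plan is to reduce Theorem~\ref{thm:ni-fibered} to Ni's detection theorem at the level of knot Floer homology, namely that $\hfk(K,g)\cong\F$ if and only if $K$ is fibered, and to bridge $\hfp(S^3_0(K),\spc_{g-1})$ and $\hfk(K,g)$ via the large surgery / integer surgery formula of Ozsv\'ath--Szab\'o together with the adjunction inequality already cited above.

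For the easy direction, suppose $K$ is fibered of genus $g\geq 2$ with fiber $\Sigma$ and monodromy $\phi$, so that $S^3_0(K)$ is the surface bundle $M_\phi$. A Heegaard diagram adapted to the open book structure on the knot complement exhibits a single generator in Alexander grading $g$, whence $\hfk(K,g)\cong\F$ by Ozsv\'ath--Szab\'o; alternatively, one can see $\hfp(S^3_0(K),\spc_{g-1})\cong\F$ directly from the nonvanishing contact class of the tight structure supported by the fibered open book. Either way, the adjunction inequality (which forces $\hfp(S^3_0(K),\spc_i)=0$ for $|i|\geq g$) together with the mapping cone formula for $0$-surgery identifies these two groups, giving $\hfp(S^3_0(K),\spc_{g-1})\cong\F$.

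For the converse, assume $\hfp(S^3_0(K),\spc_{g-1})\cong\F$. The same surgery formula produces the inequality $\dim\hfk(K,g)\leq \dim\hfp(S^3_0(K),\spc_{g-1})=1$; combined with the Ozsv\'ath--Szab\'o genus-detection theorem that $\hfk(K,g)\neq 0$, this forces $\hfk(K,g)\cong\F$. Now Ni's theorem kicks in: the plan is to identify $\hfk(K,g)$ with $\SFH(M_\Sigma)$ via Juh\'asz's decomposition formula, where $M_\Sigma$ is the sutured manifold obtained by cutting the knot exterior along a minimal-genus Seifert surface $\Sigma$, and then to use Gabai's sutured manifold hierarchies --- together with the nonvanishing of $\SFH$ after each taut decomposition --- to inductively force $M_\Sigma$ to be a product sutured manifold $\Sigma\times I$ when $\SFH(M_\Sigma)\cong\F$. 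The product structure on $M_\Sigma$ is precisely the statement that $K$ is fibered.

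The main obstacle is Ni's theorem itself, which requires the entire package of Gabai's sutured manifold hierarchies and Juh\'asz's sutured Floer homology, both of which sit outside the Heegaard Floer toolkit used elsewhere in the note and both of which are genuinely delicate. By contrast, the translation between $\hfp(S^3_0(K),\spc_{g-1})$ and $\hfk(K,g)$ is a comparatively routine consequence of the surgery and adjunction machinery, and the easy direction amounts to identifying the expected generator in the fibered model diagram.
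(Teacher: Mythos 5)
The paper does not prove this statement at all: it is quoted as a known theorem, with the hard direction due to Ni \cite{ni-hfk} and the identification $\hfp(S^3_0(K),\spc_{g-1})\cong\hfk(K,g)$ for $g\geq 2$ given by \cite[Corollary~4.5]{osz-hfk}, which is exactly the reduction you outline. Your sketch is therefore consistent with the paper's treatment; the one small caveat is that the bridge between the two groups is the isomorphism of \cite[Corollary~4.5]{osz-hfk} (valid precisely because $g\geq 2$, which is why the paper restricts to that case) rather than the later mapping cone formula, and all of the substance is, as you acknowledge, carried by Ni's fibredness theorem, which the paper --- like you --- treats as a black box.
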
 

Recall that there is an exact triangle 
\begin{equation} \label{eqn:trianglehf}
\begin{gathered}
\xymatrix@C=0pt@R=35pt{
\hfp(Y,\spc) \ar[rr]^-{\cdot \,U} & & \hfp(Y,\spc) \ar[dl]^-{j} \\
& \hf(Y,\spc), \ar[ul]^-{i}  \\
} 
\end{gathered}
\end{equation}
where $i$ and multiplication by $U$ preserve the  $\Z/2\Z$ grading and $j$ shifts it by $1$.
%Since $c_1(\spc_{g-1})$ is non-torsion, $\hfp(S^3_0(K),\spc_{g-1})$ is $U$-torsion and finite-dimensional over $\F$.  Therefore, \[\chi(\hf(S^3_0(K),\spc_{g-1}))=0.\] 
Moreover, we claim the following.

\begin{proposition}
\label{prop:Utrivial}
$U$ acts trivially on $\hfp(S^3_0(K),\spc_{g-1})$.
\end{proposition}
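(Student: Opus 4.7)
The plan is to realize the $U$-action on $\hfp(S^3_0(K),\spc_{g-1})$ as the cobordism map of a blow-up of the trivial cobordism, and then to force it to vanish via the adjunction inequality applied to a suitable embedded surface.

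Concretely, I would form the cobordism
\[
\tilde W \;=\; (S^3_0(K) \times [0,1]) \# \cpbar: S^3_0(K) \to S^3_0(K),
\]
with exceptional sphere $E \subset \cpbar$ of self-intersection $-1$, and choose a Spin$^c$ structure $\tilde\spc$ on $\tilde W$ that restricts to $\spc_{g-1}$ on both ends and satisfies $\langle c_1(\tilde\spc),[E]\rangle = 3$. By the Ozsv\'ath--Szab\'o blow-up formula, the resulting cobordism map is
\[
F^+_{\tilde W,\tilde\spc} \;=\; U \cdot F^+_{S^3_0(K)\times [0,1],\, \spc_{g-1}} \;=\; U \cdot \mathrm{id}_{\hfp(S^3_0(K),\spc_{g-1})},
\]
i.e.\ precisely multiplication by $U$ on $\hfp(S^3_0(K),\spc_{g-1})$.

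Next, I would tube a mid-level copy of $\Sigma_0 \subset S^3_0(K) \times \{1/2\}$ together with $E$ along a small arc in $\tilde W$ to build a closed, connected, oriented surface $\hat\Sigma \subset \tilde W$ of genus $g$ with $[\hat\Sigma] = [\Sigma_0] + [E]$. Then $[\hat\Sigma]^2 = [\Sigma_0]^2 + [E]^2 = -1$ and $\langle c_1(\tilde\spc),[\hat\Sigma]\rangle = 2(g-1) + 3 = 2g+1$, so
\[
\langle c_1(\tilde\spc),[\hat\Sigma]\rangle + [\hat\Sigma]^2 \;=\; 2g \;>\; 2g - 2 \;=\; 2g(\hat\Sigma) - 2,
\]
and the adjunction inequality for cobordism maps forces $F^+_{\tilde W,\tilde\spc} = 0$. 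Combining the two steps, $U$ acts as zero on $\hfp(S^3_0(K),\spc_{g-1})$.

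The main point requiring care is that $\hat\Sigma$ has negative self-intersection $-1$, whereas the most commonly cited form of the adjunction inequality for Heegaard Floer cobordism maps assumes $[\hat\Sigma]^2 \geq 0$. Since $g(\hat\Sigma) = g \geq 2$, however, a version valid for closed connected surfaces of genus at least $1$ with arbitrary self-intersection applies, which is obtained from the standard statement by a tubing argument with parallel copies that raises the self-intersection into the nonnegative range while preserving the violation of the inequality.
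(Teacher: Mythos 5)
Your first step is correct and is a nice observation: by the blow-up formula for $\hfp$ (Theorem 3.7 of the cobordism paper, with $\ell=1$ so that the map picks up $U^{\ell(\ell+1)/2}=U$), the cobordism $\tilde W=(S^3_0(K)\times I)\#\cpbar$ with $\langle c_1(\tilde\spc),[E]\rangle=\pm 3$ does induce $U\cdot\mathrm{id}$ on $\hfp(S^3_0(K),\spc_{g-1})$, so it would suffice to show $F^+_{\tilde W,\tilde\spc}=0$. The gap is in the second step. The adjunction inequality for cobordism maps used in this paper (and the only version available off the shelf) requires the embedded surface to have nonnegative self-intersection, while your $\hat\Sigma$ has square $-1$, and your proposed repair fails: tubing together $n$ parallel copies of $\hat\Sigma$ yields a surface in the class $n[\hat\Sigma]$ with self-intersection $n^2[\hat\Sigma]^2=-n^2$, which is \emph{more} negative, not less. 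Worse, no surface of nonnegative square in $\tilde W$ can do the job: every class $a[\Sigma_0]+b[E]$ has square $-b^2\leq 0$, so the nonnegative-square classes are exactly the multiples $a[\Sigma_0]$, and a surface in that class ($a\geq 1$) has genus at least $ag$ while $\langle c_1(\tilde\spc),a[\Sigma_0]\rangle+0=2a(g-1)\leq 2(ag)-2$, so adjunction is never violated. You would therefore need a genuine negative-self-intersection adjunction inequality for cobordism maps (the analogue of the symplectic Thom conjecture machinery), which is a much deeper statement, is not established in the references this paper uses, and in this extremal situation is essentially equivalent to the proposition you are trying to prove.

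The paper's proof is softer and avoids adjunction entirely: remove a tubular neighborhood of an interior copy of $\Sigma_0$ from the product cobordism $S^3_0(K)\times I$ to obtain a cobordism $Z$ from $(\Sigma_0\times S^1)\sqcup S^3_0(K)$ to $S^3_0(K)$. Since $\hfp(\Sigma_0\times S^1,\spc_{g-1})\cong\F$, the induced map $F_Z$ is an isomorphism, and it satisfies $F_Z(a\otimes Ub)=F_Z(Ua\otimes b)$; as $U$ obviously annihilates $\hfp(\Sigma_0\times S^1,\spc_{g-1})\cong\F$, it must annihilate $\hfp(S^3_0(K),\spc_{g-1})$ as well. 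If you want to keep your blow-up framing, you would in effect have to prove this module-structure statement first.
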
 

\begin{proof}
Let
\[ Z:(\Sigma_0\times S^1)\sqcup S^3_0(K)\to S^3_0(K) \]
be the cobordism obtained from $S^3_0(K)\times I$ by removing a neighborhood of $\Sigma_0$ from the interior. Since
\begin{equation}\label{eqn:prod}
\hfp(\Sigma_0\times S^1,\spc_{g-1})\cong \F,
\end{equation}
the induced map
\[ F_Z:\hfp(\Sigma_0\times S^1,\spc_{g-1})\otimes \hfp(S^3_0(K),\spc_{g-1})\to \hfp(S^3_0(K),\spc_{g-1}) \]
is an isomorphism. Moreover, it satisfies
\[ F_Z(a\otimes Ub) = F_Z(Ua\otimes b). \]
The $U$-action on \eqref{eqn:prod} is clearly trivial, which then implies the same for $\hfp(S^3_0(K),\spc_{g-1})$ by the relation above.
\end{proof}

Proposition \ref{prop:Utrivial} together with the exact triangle in \eqref{eqn:trianglehf} implies that 
\[ \hf(S^3_0(K),\spc_{g-1}) \cong \hfp(S^3_0(K),\spc_{g-1})\oplus \hfp(S^3_0(K),\spc_{g-1})[1].\]
In particular, we have the following.

\begin{corollary} \label{cor:hatfibered}
If $K$ is fibered then
\[ \hf(S^3_0(K),\spc_{g-1}) \cong \F_0 \oplus \F_1, \]
where the subscripts on the right denote the $\Z/2\Z$ grading.  If $K$ is not fibered then
\[ \dim \hf(S^3_0(K),\spc_{g-1}) \geq 4. \]
\end{corollary}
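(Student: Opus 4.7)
The plan is to combine two ingredients that have already been fully assembled in the paragraphs just above the statement: the splitting
\[ \hf(S^3_0(K),\spc_{g-1}) \cong \hfp(S^3_0(K),\spc_{g-1}) \oplus \hfp(S^3_0(K),\spc_{g-1})[1] \]
deduced from the exact triangle \eqref{eqn:trianglehf} together with Proposition~\ref{prop:Utrivial}, and Ni's characterization of fiberedness (Theorem~\ref{thm:ni-fibered}), supplemented by the Ozsv\'ath--Szab\'o nonvanishing $\hfp(S^3_0(K),\spc_{g-1}) \neq 0$.

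For the first claim, if $K$ is fibered then Theorem~\ref{thm:ni-fibered} gives $\hfp(S^3_0(K),\spc_{g-1}) \cong \F$, concentrated in some single $\Z/2\Z$ grading $k$. Feeding this into the splitting yields a copy of $\F$ in grading $k$ and a second copy in grading $k+1$. Since $\{k,k+1\} = \{0,1\} \pmod 2$ regardless of $k$, this is exactly $\F_0 \oplus \F_1$. (As a sanity check, \eqref{eqn:chi-hf} forces $\chi(\hf(S^3_0(K),\spc_{g-1})) = 0$ because $b_1(S^3_0(K)) = 1$, consistent with one generator in each grading.)

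For the second claim, if $K$ is not fibered then Theorem~\ref{thm:ni-fibered} says that $\hfp(S^3_0(K),\spc_{g-1})$ is not isomorphic to $\F$, while the nonvanishing result tells us it is also not zero. Hence $\dim \hfp(S^3_0(K),\spc_{g-1}) \geq 2$, and the splitting doubles this to produce $\dim \hf(S^3_0(K),\spc_{g-1}) \geq 4$.

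I anticipate no real obstacle here: the statement is essentially a bookkeeping consequence of the decomposition and the fibered/non-fibered dichotomy already in hand. The only mildly subtle point is tracking the grading in the fibered case to certify that the two generators land in \emph{distinct} gradings rather than doubling up in one, but this is immediate from the $[1]$ shift in the second summand.
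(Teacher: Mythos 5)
Your proposal is correct and follows exactly the paper's route: the paper derives the same splitting $\hf(S^3_0(K),\spc_{g-1}) \cong \hfp(S^3_0(K),\spc_{g-1}) \oplus \hfp(S^3_0(K),\spc_{g-1})[1]$ from Proposition~\ref{prop:Utrivial} and the triangle \eqref{eqn:trianglehf}, and then reads off the corollary from Theorem~\ref{thm:ni-fibered} together with the nonvanishing of $\hfp(S^3_0(K),\spc_{g-1})$. Your extra care about the two generators landing in distinct $\Z/2\Z$ gradings is a correct and welcome bit of bookkeeping that the paper leaves implicit.
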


We now consider the natural 2-handle cobordisms 
\[ S^3 \xrightarrow{X_k} S^3_k(K) \xrightarrow{W_{k+1}} S^3_{k+1}(K) \]
for each integer $k \geq 0$, where the 2-handle $W_{k+1}$ is attached along a $-1$-framed meridian of $K$.  We observe in Figure~\ref{fig:handleslide} that
\[ W_{k+1} \circ X_k = X_k \cup_{S^3_k(K)} W_{k+1} \cong X_{k+1} \# \cpbar, \]
\begin{figure}[t]
\begin{tikzpicture}
\draw (-1,1) node[right] {$k$} -- (-1,-0.3) (-1,-0.5) -- (-1,-1) node[below] {$K$};
\path (-1,-0.4) arc (-90:75:0.5 and 0.25) node (m) {} arc (75:0:0.5 and 0.25) node[right] {$-1$};
\draw (m) arc(75:-255:0.5 and 0.25);
\node at (1,-0.15) {$\cong$};
\draw (2,1) node[right]{$k+1$} -- (2,-1) node[below] {$K$};
\draw (3,-0.15) ellipse (0.5 and 0.25);
\node[right] at (3.5,-0.15) {$-1$};
\end{tikzpicture}
\caption{A handleslide showing that $W_{k+1} \circ X_k \cong X_{k+1} \# \cpbar$.}
\label{fig:handleslide}
\end{figure}%
and hence if we write 
\[ V_k = W_k \circ W_{k-1} \circ \dots \circ W_1: S^3_0(K) \to S^3_k(K) \]
then the composition
\[ Z_k = V_k \circ X_0: S^3 \to S^3_k(K) \]
is a $k$-fold blow-up of $X_k$, i.e.,
\begin{align*}
X_k \# k\cpbar &\cong W_k \circ (X_{k-1} \# (k-1)\cpbar) \\
&\cong W_k \circ W_{k-1} \circ (X_{k-2} \# (k-2)\cpbar) \\
&\cong \dots \\
&\cong (W_k \circ \dots \circ W_1) \circ X_0 = Z_k.
\end{align*}
The maps induced by $X_k$ and $W_{k+1}$ fit into an $(\infty,0,1)$-surgery exact triangle,
\begin{equation} \label{eqn:triangle}
\begin{gathered}
\xymatrix@C=-15pt@R=30pt{
\hf(S^3) \ar[rr]^{F_{X_k}} & & \hf(S^3_k(K)) \ar[dl]^{F_{W_{k+1}}} \\
& \hf(S^3_{k+1}(K)). \ar[ul]  \\
}
\end{gathered}
\end{equation}

A $\Sc$ structure on $X_0$ is determined by its restriction to $S^3_0(K)$, or, equivalently, by  the evaluation of its first Chern class on $[\Sigma_0]$. Let $\spt_i$ denote the unique $\Sc$ structure on $X_0$ with
\[\langle c_1(\spt_i),[\Sigma_0]\rangle=2i.\]
Define
\[y_i := F_{X_0,\spt_i}(\mathbf{1})\in \hf(S^3_0(K),\spc_i),\]
where $\mathbf{1}$ denotes the generator of $\hf(S^3) \cong \F$.

Let $\Sigma_k$ denote the capped off Seifert surface in $X_k$, with
\[\Sigma_k\cdot\Sigma_k=k.\]
A $\Sc$ structure on $X_k$ is determined by the evaluation of its first Chern class on $[\Sigma_k]$. Such Chern classes are characteristic elements, so this evaluation agrees with $k \pmod{2}$. Let $\spt_{k,i}$ denote the unique $\Sc$ structure on $X_k$ satisfying
\[\langle c_1(\spt_{k,i}),[\Sigma_k]\rangle+k=2i.\]
The adjunction inequality \cite[Proof of Theorem~1.5]{osz-cobordism} implies that the map
\[F_{X_k,\spt_{k,i}}:\hf(S^3)\to\hf(S^3_k(K))\]
is nontrivial only if
\[|\langle c_1(\spt_{k,i}),[\Sigma_k]\rangle|+k\leq 2g-2, \]
or equivalently $1-g+k\leq i \leq g-1$.

\begin{lemma} \label{lem:compute-F_V}
Let $x_{k,i} = F_{X_k,\spt_{k,i}}({\mathbf 1})$ for all $k \geq 1$ and all $i$.  Then
\[ F_{V_k}(y_i) = x_{k,i} + \binom{k}{1} x_{k,i+1} + \binom{k}{2} x_{k,i+2} + \dots + \binom{k}{g-i-1} x_{k,g-1} \]
as elements of $\hf(S^3_k(K))$.
\end{lemma}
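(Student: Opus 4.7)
The plan is to apply the composition law for Heegaard Floer cobordism maps, the blow-up formula, and the adjunction inequality, exploiting the identification $Z_k = V_k \circ X_0 \cong X_k \# k\cpbar$ to reduce $F_{V_k}(y_i)$ to a sum of the $x_{k,j}$.

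First I would invoke the composition law. Since $y_i = F_{X_0,\spt_i}(\mathbf{1})$, and $F_{V_k}$ on each $\Sc$ summand of $\hf(S^3_0(K))$ is a sum over $\Sc$-refined cobordism maps, the composition rule yields
\[
F_{V_k}(y_i) = \sum_{\mathfrak{u}} F_{Z_k,\mathfrak{u}}(\mathbf{1}),
\]
where $\mathfrak{u}$ ranges over Spin$^c$ structures on $Z_k$ with $\mathfrak{u}|_{X_0} = \spt_i$.

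Next I would parameterize these Spin$^c$ structures. A short inductive handleslide argument, tracking Figure~\ref{fig:handleslide}, shows that in $H_2(Z_k)$ we have
\[
[\Sigma_k] = [\Sigma_0] + [E_1] + \dots + [E_k],
\]
where $E_1,\dots,E_k$ are the exceptional spheres; the values $[\Sigma_0]\cdot[E_j]=1$ and $[E_j]^2=-1$ correctly recover $[\Sigma_k]^2 = k$. Thus a Spin$^c$ structure $\mathfrak{u}$ on $Z_k$ with $\mathfrak{u}|_{X_0}=\spt_i$ is pinned down by the odd integers $\epsilon_j := \langle c_1(\mathfrak{u}),[E_j]\rangle$.

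Now I would apply the blow-up formula: $F_{Z_k,\mathfrak{u}}$ vanishes unless $\epsilon_j \in \{\pm 1\}$ for each $j$, in which case it equals $F_{X_k,\mathfrak{u}|_{X_k}}$. For a sign pattern with exactly $m$ of the $\epsilon_j$ equal to $+1$, the above homology identity gives
\[
\langle c_1(\mathfrak{u}),[\Sigma_k]\rangle + k = 2i + (2m - k) + k = 2(i+m),
\]
so $\mathfrak{u}|_{X_k} = \spt_{k,i+m}$. Summing over the $\binom{k}{m}$ sign patterns realizing each $m$ yields
\[
F_{V_k}(y_i) = \sum_{m=0}^{k} \binom{k}{m}\, x_{k,i+m},
\]
and the adjunction inequality noted above forces $x_{k,i+m}=0$ for $i+m > g-1$, truncating the sum at $m = g-1-i$ to match the claimed formula.

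The main obstacle I expect is verifying the homology identity $[\Sigma_k] = [\Sigma_0] + \sum_j [E_j]$ cleanly through the sequence of handleslides, since the combinatorics must correctly track both framing changes and second homology. Once that identification is in place, the rest is a routine bookkeeping exercise applying cobordism functoriality and the blow-up formula.
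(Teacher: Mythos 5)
Your proof is correct and follows essentially the same route as the paper's: both decompose $F_{V_k}\circ F_{X_0,\spt_i}$ via the composition law into a sum of maps $F_{Z_k,\mathfrak{u}}$ over $\Sc$ structures on $Z_k\cong X_k\#k\cpbar$ restricting to $\spt_i$ on $X_0$, use the identity $[\Sigma_0]=[\Sigma_k]-\sum_j[E_j]$ together with the blow-up formula to reduce each surviving term to some $F_{X_k,\spt_{k,i+m}}$, and count $\binom{k}{m}$ sign patterns, truncating by adjunction. The homology identity you flag as the main obstacle is likewise simply asserted in the paper (it is immediate from the handleslides in Figure~\ref{fig:handleslide}), so there is no gap relative to the paper's argument.
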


\begin{proof}
Let $E_1,\dots, E_k \subset Z_k$ denote the exceptional spheres in $Z_k \cong X_k \# k\cpbar$, and $e_1,\dots,e_k$  their Poincar{\'e} duals in $H^2(Z_k)$.
Note that in $Z_k$, the surface $\Sigma_0$ is given by
\[\Sigma_0 = \Sigma_k-E_1-\dots -E_k.\]
In particular,
\begin{equation}\label{eqn:eval-zk}
\langle c_1(\spt_{k,i}+a_1e_1+\dots + a_ke_k),[\Sigma_0]\rangle=2i-k+a_1+\dots+a_k
\end{equation}
in $Z_k$.  We will evaluate $F_{Z_k}$ by applying the blow-up formula for cobordism maps \cite[Theorem~3.7]{osz-cobordism}, which says that for a $\Sc$ cobordism
\[(W,\spt):(Y_1,\spc_1)\to(Y_2,\spc_2)\]
with blow-up $\hat{W} = W\#\overline{\CP}^2$ and exceptional sphere $E$, 
%\[ F_{\hat{W},\spt\pm(2\ell+1)e} = U^{\ell(\ell+1)/2}F_{W,\spt}: \mathit{HF}^\circ(Y_1,\spc_1) \to \mathit{HF}^\circ(Y_2,\spc_2) \]
%for any $\ell \geq 0$, where $e=\mathit{PD}(E)$.  In particular, as maps on $\hf$, 
\[F_{\hat{W},\spt\pm(2\ell+1)\mathit{PD}(E)} = \begin{cases} F_{W,\spt} & \ell=0 \\ 0 &\ell\neq 0 \end{cases}\] 
as maps on $\hf$ for any $\ell \geq 0$.

Let $F_i$ denote the component of $F_{Z_k} = F_{V_k} \circ F_{X_0}$ that factors through $\hf(S^3_0(K),\spc_i)$.  On the one hand, we have
\[ F_i = F_{V_k} \circ F_{X_0,\spt_i}: \hf(S^3) \to \hf(S^3_k(K)). \]
%be the component of $F_{Z_k}$ that factors through $\hf(S^3_0(K),\spc_i)$. Let \[e=e_1+\dots +e_k.\]
%On one hand, we have that \[ F_{i} =  F_{V_k}\circ F_{X_0,\spt_{i}}. \]
 On the other hand, if we let $e = e_1+\dots+e_k$, then for each $i$ we have
 \begin{align*}
 F_{i}&=F_{Z_k,\spt_{k,i}+e} \\
 &\quad+ \sum_{j_1} F_{Z_k,\spt_{k,i+1}+e-2e_{j_1}}\\
 &\quad+ \sum_{j_1<j_2} F_{Z_k,\spt_{k,i+2}+e-2e_{j_1}-2e_{j_2}} \\
 &\quad+\dots \\
 &\quad+\sum_{j_1<\dots<j_{g-i-1}} F_{Z_k,\spt_{k,g-1}+e-2e_{j_1}+\dots-2e_{j_{g-i-1}}},
\end{align*}
by the formula \eqref{eqn:eval-zk}.
From the blow-up formula, we have
\[ F_{Z_k,\spt_{k,j}\pm e_1\pm\dots \pm e_k} = F_{X_k,\spt_{k,j}}, \]
so the expression for $F_{i}$ above becomes
\[ F_{i}=F_{X_k,\spt_{k,i}} 
 + \binom{k}{1} F_{X_k,\spt_{k,i+1}}
 + \binom{k}{2} F_{X_k,\spt_{k,i+2}} 
 +\dots 
 +\binom{k}{g-i-1} F_{X_k,\spt_{k,g-1}}. \]
We conclude by evaluating both sides on the element ${\mathbf 1} \in \hf(S^3)$.
\end{proof}

\begin{proposition} \label{prop:key-kernel}
For all integers $k \geq 1$, we have
\[ \ker \big(F_{V_k}: \hf(S^3_0(K)) \to \hf(S^3_k(K))\big) \subset \Span_\F(y_{1-g},\dots,y_{g-1}). \]
This inclusion is an equality for all $k \geq 2g-1$.
\end{proposition}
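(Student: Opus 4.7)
The plan is to induct on $k$. The base case $k = 1$ is immediate from the exact triangle \eqref{eqn:triangle} at $k=0$: we have $\ker F_{V_1} = \ker F_{W_1} = \Img F_{X_0}$, and this image is spanned by $F_{X_0}(\mathbf{1}) = \sum_i y_i \in \Span_\F(y_{1-g}, \dots, y_{g-1})$.

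For the inductive step, I would factor $F_{V_k} = F_{W_k} \circ F_{V_{k-1}}$ and use the triangle at level $k-1$ to write
\[ \ker F_{V_k} = F_{V_{k-1}}^{-1}(\Img F_{X_{k-1}}). \]
The adjunction inequality forces $F_{X_{k-1},\spt_{k-1,j}} = 0$ unless $j \in [k-g, g-1]$, so $F_{X_{k-1}} = 0$ whenever $k \geq 2g$; in that case $\ker F_{V_k} = \ker F_{V_{k-1}}$ and the inductive hypothesis closes the argument. Otherwise $\Img F_{X_{k-1}} = \F \cdot x_{k-1}$ with $x_{k-1} = \sum_{j=k-g}^{g-1} x_{k-1,j}$, and it suffices to exhibit some $z \in \Span_\F(y_{1-g}, \dots, y_{g-1})$ with $F_{V_{k-1}}(z) = x_{k-1}$: then $\ker F_{V_k} = \F \cdot z + \ker F_{V_{k-1}}$ is contained in $\Span_\F(y_{1-g}, \dots, y_{g-1})$.

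I expect the construction of $z$ to be the main technical obstacle. My plan is to examine $F_{V_{k-1}}$ restricted to the subspace $\Span_\F(y_{k-g}, \dots, y_{g-1})$. By Lemma~\ref{lem:compute-F_V} and the vanishing of $x_{k-1,j}$ for $j$ outside $[k-g, g-1]$, the matrix $M$ of this restriction with respect to the bases $(y_i)_{i \in [k-g, g-1]}$ and $(x_{k-1,j})_{j \in [k-g, g-1]}$, both ordered increasingly, has entries $M_{ji} = \binom{k-1}{j-i}$ for $j \geq i$ and $0$ otherwise. Since $\binom{k-1}{0} = 1$, $M$ is lower triangular with unit diagonal and hence invertible over $\F$. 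Solving $M\vec a = (1, \dots, 1)^T$ yields coefficients $a_i \in \F$ for which $z := \sum_{i=k-g}^{g-1} a_i y_i$ satisfies $F_{V_{k-1}}(z) = \sum_{j=k-g}^{g-1} x_{k-1,j} = x_{k-1}$.

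For the equality when $k \geq 2g - 1$: the adjunction range $[k-g+1, g-1]$ of possibly-nonzero $x_{k,j}$ is empty, so Lemma~\ref{lem:compute-F_V} forces $F_{V_k}(y_i) = 0$ for every $i$, giving $\Span_\F(y_{1-g}, \dots, y_{g-1}) \subset \ker F_{V_k}$. Combined with the inclusion already established by induction, this yields the desired equality.
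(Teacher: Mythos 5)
Your proof is correct and follows essentially the same route as the paper's: induction on $k$, using the surgery exact triangle to identify $\ker F_{W_k}$ with $\Img F_{X_{k-1}}$, together with the unitriangular (hence invertible over $\F$) matrix of binomial coefficients from Lemma~\ref{lem:compute-F_V} to produce a preimage of $F_{X_{k-1}}(\mathbf{1})$ inside $\Span_\F(y_{1-g},\dots,y_{g-1})$. The only differences are cosmetic --- you construct that preimage explicitly and treat the case $F_{X_{k-1}}=0$ separately, where the paper absorbs both into a scalar $c \in \F$ --- and your argument for equality when $k \geq 2g-1$ is identical to the paper's.
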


\begin{proof}
When $k=1$, the exact triangle \eqref{eqn:triangle} says that
\[ \ker(F_{V_1}) = \ker(F_{W_1}) = \Img(F_{X_0}) = \Span_\F\left(\sum_{i=1-g}^{g-1} y_i\right). \]
We prove the inclusion in general by induction on $k$.

Suppose that $k \geq 1$, and fix an element $z \in \ker(F_{V_{k+1}})$.  Then
\[ F_{W_{k+1}}\big(F_{V_k}(z)\big) = 0 \]
by definition, so the exact triangle \eqref{eqn:triangle} tells us that $F_{V_k}(z) \in \Img(F_{X_k})$, or equivalently
\begin{equation} \label{eq:V_k-x}
F_{V_k}(z) = c\cdot F_{X_k}({\mathbf 1})
\end{equation}
for some $c\in\F$.  Lemma~\ref{lem:compute-F_V} says that each element
\[ x_{k,i} = F_{X_k,\spt_{k,i}}({\mathbf 1}) \in \hf(S^3_k(K)) \]
is a linear combination of the various $F_{V_k}(y_i)$, since the matrix of the coefficients of the system of linear equations relating $(F_{V_k}(y_i))_i$ to $(x_{k,i})_i$ is triangular and clearly invertible.  In particular, summing over all $i$ reveals that
\begin{equation} \label{eq:X_k-1-span}
F_{X_k}({\mathbf 1}) \in \Span_\F\big(F_{V_k}(y_{1-g}),\dots,F_{V_k}(y_{g-1})\big).
\end{equation}
Combining \eqref{eq:V_k-x} and \eqref{eq:X_k-1-span}, there are coefficients $a_j \in \F$ such that
\[ F_{V_k}(z) = c \cdot \sum_{j=1-g}^{g-1} a_j F_{V_k}(y_j), \]
or equivalently
\begin{equation} \label{eq:z-yj-combo}
z - \sum_{j=1-g}^{g-1} ca_j\cdot y_j \in \ker F_{V_k}.
\end{equation}
By induction the left side of \eqref{eq:z-yj-combo} lies in $\Span_\F(y_{1-g},\dots,y_{g-1})$, hence the same is true of $z$.  Since $z$ was an arbitrary element of $\ker F_{V_{k+1}}$, this completes the inductive step.

To see that equality holds when $k \geq 2g-1$, we observe from Lemma~\ref{lem:compute-F_V} that $F_{V_k}(y_i)$ is a linear combination of various elements $x_{k,j} = F_{X_k,\spt_{k,j}}({\mathbf 1})$.  Using the adjunction inequality, we have already noted that $x_{k,j} = 0$ unless
\[ 1-g+k \leq j \leq g-1, \]
so for $k \geq 2g-1$ the elements $x_{k,j}$ and hence the $F_{V_k}(y_i)$ are all zero.
\end{proof}

\begin{proposition} \label{prop:hf-zero-surgery}
Suppose that $S^3_n(K)$ is an L-space for some positive integer $n$.  Then
\[ \hf(S^3_0(K),\spc_j) = \begin{cases} \F_0 \oplus \F_1 & y_j \neq 0 \\ 0 & y_j = 0 \end{cases} \]
for all $j$, where the subscripts on each copy of $\F$ denote the $\Z/2\Z$ grading.
\end{proposition}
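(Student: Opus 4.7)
The plan is to determine $\dim \hf(S^3_0(K))$ exactly by combining two parallel telescoping computations---one for $\dim \hf(S^3_k(K))$ via the surgery exact triangle, and one for $\dim \ker F_{V_k}$---and then to use Proposition~\ref{prop:key-kernel} to pin down the dimensions in each $\Sc$-summand.

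First, since $S^3_n(K)$ is an L-space, so is $S^3_k(K)$ for every integer $k \geq n$ by \cite[Proposition~2.1]{osz-lens}, and hence $\dim \hf(S^3_k(K)) = k$ for such $k$. Setting $a_k = \dim \Img F_{X_k} \in \{0,1\}$, the surgery exact triangle \eqref{eqn:triangle} gives
\[ \dim \hf(S^3_{k+1}(K)) - \dim \hf(S^3_k(K)) = 1 - 2a_k, \]
so telescoping from $k = 0$ to $k = n-1$ yields
\[ \dim \hf(S^3_0(K)) = 2 \sum_{k=0}^{n-1} a_k. \]
Meanwhile, $\ker F_{W_{k+1}} = \Img F_{X_k}$ by exactness, so $\ker F_{V_{k+1}} = F_{V_k}^{-1}(\Img F_{X_k})$, giving
\[ \dim \ker F_{V_{k+1}} - \dim \ker F_{V_k} = \dim\!\left(\Img F_{X_k} \cap \Img F_{V_k}\right), \]
which is $0$ or $1$ depending on whether $F_{X_k}({\mathbf 1})$ lies in $\Img F_{V_k}$.

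The crux, which I expect to be the main step, is to verify that $F_{X_k}({\mathbf 1}) \in \Img F_{V_k}$ for every $k$, so that the kernel-growth at step $k$ is exactly $a_k$. For this I would read off from Lemma~\ref{lem:compute-F_V} that the matrix expressing $(F_{V_k}(y_i))_i$ in terms of $(x_{k,j})_j$ is upper triangular with $1$'s on the diagonal over the index range $j \in [1-g+k,\, g-1]$ outside of which $x_{k,j}$ vanishes by the adjunction inequality. Consequently
\[ \Span_\F(F_{V_k}(y_i))_i = \Span_\F(x_{k,j})_j \subseteq \Img F_{V_k}, \]
and in particular $F_{X_k}({\mathbf 1}) = \sum_j x_{k,j}$ lies in $\Img F_{V_k}$.

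Telescoping the kernel identity then yields $\dim \ker F_{V_n} = \sum_{k=0}^{n-1} a_k$, and combining with the first telescoping gives $\dim \hf(S^3_0(K)) = 2 \dim \ker F_{V_n}$. Writing $J = \{j : y_j \neq 0\}$, Proposition~\ref{prop:key-kernel} implies $\dim \ker F_{V_n} \leq |J|$ since the $y_j$ sit in distinct $\Sc$-summands of $\hf(S^3_0(K))$, so $\dim \hf(S^3_0(K)) \leq 2|J|$. For the reverse inequality, each $y_j \neq 0$ lies in a definite $\Z/2\Z$-grading of $\hf(S^3_0(K),\spc_j)$, and \eqref{eqn:chi-hf} applied with $b_1=1$ forces equal dimensions in the two mod-$2$ gradings; so $\dim \hf(S^3_0(K),\spc_j) \geq 2$ for each $j \in J$, whence $\dim \hf(S^3_0(K)) \geq 2|J|$. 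Equality must therefore hold throughout, forcing $\hf(S^3_0(K),\spc_j) = 0$ for $j \notin J$ and $\hf(S^3_0(K),\spc_j) \cong \F_0 \oplus \F_1$ for $j \in J$, as claimed.
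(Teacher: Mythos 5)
Your proposal is correct and follows essentially the same route as the paper: the telescoping dimension count $\dim \hf(S^3_0(K)) = 2\sum a_k$ from the surgery triangle, the identification of kernel growth of $F_{V_k}$ with the nonvanishing of $F_{X_k}$ via the triangularity in Lemma~\ref{lem:compute-F_V}, the bound $\dim\ker F_{V_n} \leq \#\{j : y_j \neq 0\}$ from Proposition~\ref{prop:key-kernel}, and the Euler characteristic squeeze. The only (harmless) difference is that you compute the kernel growth exactly via $\dim F_{V_k}^{-1}(S) = \dim\ker F_{V_k} + \dim(S \cap \Img F_{V_k})$, where the paper settles for the inequality $\dim\ker F_{V_n} \geq m$, which suffices.
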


\begin{proof}
We observe from \eqref{eqn:triangle} that
\begin{equation} \label{eq:rank-change}
\dim_\F \hf(S^3_{k+1}(K)) = \dim_\F \hf(S^3_k(K)) + \begin{cases} 1 & F_{X_k} = 0 \\ -1 & F_{X_k} \neq 0 \end{cases}
\end{equation}
for all $k \geq 0$.  If $m$ denotes the number of $k \in \{0,1,\dots,n-1\}$ such that $F_{X_k} \neq 0$, then 
\[ n = \dim_\F \hf(S^3_n(K)) = \dim_\F \hf(S^3_0(K)) + (n-m) - m, \]
which simplifies to
\begin{equation} \label{eq:s30-2m}
\dim_\F \hf(S^3_0(K)) = 2m.
\end{equation}
Our goal is thus to compute $m$.

Supposing that $F_{X_k} \neq 0$ for some $k \geq 0$, then $F_{X_k}({\mathbf 1})$ is a nonzero element which spans $\ker(W_{k+1})$, and from \eqref{eq:X_k-1-span} it has the form
\[ F_{X_k}({\mathbf 1}) = F_{V_k}\left(\sum_{j=1-g}^{g-1} a_j y_j \right) \]
for some coefficients $a_j \in \F$.  The sum $\sum a_j y_j$ is thus not in $\ker (F_{V_k})$, but it is in
\[ \ker(F_{V_{k+1}}) = \ker(F_{W_{k+1}} \circ F_{V_k}), \]
so we have $\dim \ker(F_{V_{k+1}}) > \dim \ker(F_{V_k})$.  This  implies that \[\dim\ker(F_{V_{n}})\geq m.\] Proposition~\ref{prop:key-kernel} then implies that
\begin{equation} \label{eq:m-span-yj}
m \leq \dim \Span_\F(y_j). 
\end{equation} 
But the nonzero $y_j$ are all linearly independent, since they belong to different summands $\hf(S^3_0(K),\spc_j)$ of $\hf(S^3_0(K))$, so by combining \eqref{eq:s30-2m} and \eqref{eq:m-span-yj} we conclude that
\begin{equation} \label{eq:dim-s30-bound}
\dim_\F \hf(S^3_0(K)) \leq 2\cdot \#\{j \mid y_j \neq 0\}.
\end{equation}
If $y_j \neq 0$ then $\hf(S^3_0(K),\spc_j)$ is nonzero, and its Euler characteristic is zero by \eqref{eqn:chi-hf}, so
\[ \F_0 \oplus \F_1 \subset \hf(S^3_0(K),\spc_j) \mathrm{\ if\ } y_j \neq 0. \]
%\[ \dim_\F \hf(S^3_0(K),\spc_j) \geq 2 \mathrm{\ if\ } y_j \neq 0. \]
Thus the inequality in \eqref{eq:dim-s30-bound} must be an equality, and each nonzero $\hf(S^3_0(K),\spc_j)$ must have the form $\F_0 \oplus \F_1$, completing the proof.
\end{proof}

%\begin{remark}
%The proof of Proposition~\ref{prop:hf-zero-surgery} should be simplifiable via the claim that $F_{X_0}$ and $F_{V_k}$ have degrees $1$ and $0$ with respect to the $\Z/2\Z$ grading on $\hf$.  Then the $y_j$ have odd $\Z/2\Z$ grading, and if some element $x \in \hf_1(S^3_0(K),\spc_j)$ is not a multiple of $y_j$ then $F_{V_k}(1)$ is a nonzero element of $\hf_1(S^3_k(K))$, preventing $S^3_k(K)$ from being an L-space for any $k$.  This claim is valid in other forms of Floer homology, cf.\ \cite[\S42.3]{km-book}, but we could not find any discussion of the degrees of cobordism maps with respect to the $\Z/2\Z$ grading in the Heegaard Floer literature.
%\end{remark}

\begin{proposition} \label{prop:fibered}
If $S^3_n(K)$ is an L-space for some integer $n>0$ then $K$ is fibered.%, and its smooth slice genus is equal to its Seifert genus.
\end{proposition}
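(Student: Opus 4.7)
The proof will combine the structural information about $\hf(S^3_0(K))$ coming from Proposition \ref{prop:hf-zero-surgery} with the fiberedness criterion in Theorem \ref{thm:ni-fibered}. The key observation is that Proposition \ref{prop:hf-zero-surgery} tightly constrains each summand $\hf(S^3_0(K),\spc_j)$ to have dimension $0$ or $2$; in particular this applies to the top $\Sc$-structure $\spc_{g-1}$, which (by Ni's theorem) is precisely where fiberedness is detected.

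First I would focus on the summand $\hf(S^3_0(K),\spc_{g-1})$. On one hand, Proposition \ref{prop:hf-zero-surgery} says this summand is either $0$ or $\F_0 \oplus \F_1$, so it has dimension at most $2$. On the other hand, from the stated nonvanishing result $\hfp(S^3_0(K),\spc_{g-1}) \neq 0$ together with Proposition \ref{prop:Utrivial} (triviality of the $U$-action) applied to the exact triangle \eqref{eqn:trianglehf}, we obtain the isomorphism
\[ \hf(S^3_0(K),\spc_{g-1}) \cong \hfp(S^3_0(K),\spc_{g-1}) \oplus \hfp(S^3_0(K),\spc_{g-1})[1], \]
so $\hf(S^3_0(K),\spc_{g-1})$ has dimension at least $2$, and is in fact exactly twice the dimension of $\hfp(S^3_0(K),\spc_{g-1})$.

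Combining these two constraints forces $\dim_\F \hf(S^3_0(K),\spc_{g-1}) = 2$, which by the above isomorphism gives $\hfp(S^3_0(K),\spc_{g-1}) \cong \F$. Theorem \ref{thm:ni-fibered} then immediately concludes that $K$ is fibered.

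I do not expect any step here to be a serious obstacle: the work was already done in building up Proposition \ref{prop:hf-zero-surgery} and Proposition \ref{prop:Utrivial}. The only thing to be careful about is that we really need both directions of the dimension bound (the $\leq 2$ bound from the L-space hypothesis, and the $\geq 2$ bound from $\hfp \neq 0$ plus trivial $U$-action), so the proof should be transparent about where each ingredient is used. Note also that this proof requires the assumption $g \geq 2$ that was installed before Theorem \ref{thm:ni-fibered}; the genus $1$ case is handled separately via Proposition \ref{prop:genus-1-reduction}.
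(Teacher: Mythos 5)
Your proof is correct and is essentially the paper's argument: the paper likewise sandwiches $\dim \hf(S^3_0(K),\spc_{g-1})$ between $2$ and $2$, citing Corollary~\ref{cor:hatfibered} (which packages exactly the nonvanishing of $\hfp(S^3_0(K),\spc_{g-1})$, the trivial $U$-action, and Theorem~\ref{thm:ni-fibered}) together with Proposition~\ref{prop:hf-zero-surgery}. You simply unpack that corollary and invoke Ni's criterion directly via $\hfp(S^3_0(K),\spc_{g-1}) \cong \F$, which is the same reasoning.
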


\begin{proof}
Corollary~\ref{cor:hatfibered} and Proposition~\ref{prop:hf-zero-surgery} tell us that
\begin{equation} \label{eq:top-spc-dim}
2 \leq \dim \hf(S^3_0(K),\spc_{g-1}) \leq 2,
\end{equation}
and that equality on the left holds if and only if $K$ is fibered, so $K$ must be fibered.
\end{proof}

\begin{proposition}
If $S^3_n(K)$ is an L-space for some integer $n>0$ then $K$ is strongly quasipositive.
\end{proposition}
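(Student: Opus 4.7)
My plan is to combine Ozsv\'ath--Szab\'o's description of the contact invariant with the nonvanishing of $y_{g-1}$ already established. By Proposition~\ref{prop:fibered}, $K$ is fibered; let $\xi$ be the contact structure on $S^3$ supported by the corresponding open book. By Hedden \cite{hedden-positivity}, $K$ is strongly quasipositive if and only if $\xi$ is tight, which in turn follows from the nonvanishing of the contact invariant $c^+(\xi)\in\hfp(-S^3)$.

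Ozsv\'ath and Szab\'o describe $c^+(\xi)$ as the image of a distinguished class $c^+(\xi_K)\in\hfp(-S^3_0(K),\spc_{g-1})$ under the cobordism map $F_{-X_0,\spt_{g-1}}$, where $-X_0$ is viewed as a 2-handle cobordism from $-S^3_0(K)$ to $-S^3$. My goal thus reduces to showing that this cobordism map is nontrivial. To do so, I will pass to the hat level using the commutative square
\[ \begin{CD}
\hf(-S^3_0(K),\spc_{g-1}) @>{\hat F}>> \hf(-S^3) \\
@VV{i_1}V @VV{i_2}V \\
\hfp(-S^3_0(K),\spc_{g-1}) @>{F_{-X_0,\spt_{g-1}}}>> \hfp(-S^3)
\end{CD} \]
coming from the exact triangle \eqref{eqn:trianglehf}. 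Since $K$ is fibered, the orientation-reversed analog of Proposition~\ref{prop:Utrivial} makes $U$ trivial on the top-left (which is isomorphic to $\F$), so $i_1$ is surjective; while $U$ is surjective on $\hfp(-S^3)\cong\cT^+$, so $i_2$ is injective. A short diagram chase then shows that $\hat F$ vanishes on $\ker i_1$ and descends to a map $\tilde F$ with $F_{-X_0,\spt_{g-1}}=i_2\circ\tilde F$. In particular, nontriviality of $\hat F$ forces $c^+(\xi)=F_{-X_0,\spt_{g-1}}(c^+(\xi_K))\ne 0$.

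It remains to show $\hat F\ne 0$, which I will do via duality: under the perfect pairing between $\hf(Y)$ and $\hf(-Y)$, the map $\hat F$ is adjoint to $F_{X_0,\spt_{g-1}}\colon\hf(S^3)\to\hf(S^3_0(K),\spc_{g-1})$, which sends $\mathbf{1}$ to $y_{g-1}$. By Corollary~\ref{cor:hatfibered} combined with Proposition~\ref{prop:hf-zero-surgery}, $y_{g-1}\ne 0$, so $\hat F$ is nontrivial and the argument is complete. The main obstacle in this plan is the reliance on Ozsv\'ath--Szab\'o's cobordism-map description of $c^+(\xi)$ flagged in Remark~\ref{rmk:tightknot}: that description cannot be used as an independent definition of the contact invariant, so one must import it from their work on the knot filtration; once it is granted, the diagram chase and duality argument above are routine.
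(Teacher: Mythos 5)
Your proof is correct and follows essentially the same route as the paper: nonvanishing of $y_{g-1}$ via Proposition~\ref{prop:hf-zero-surgery}, duality to transfer this to the turned-around cobordism $-S^3_0(K)\to -S^3$, the commuting square with the maps $i\colon\hf\to\hfp$ to deduce nontriviality at the $\hfp$ level, and Ozsv\'ath--Szab\'o's cobordism-map description of $c^+(\xi_K)$ together with Hedden's theorem. The only differences are cosmetic: the paper labels the relevant $\Sc$ structure on $-S^3_0(K)$ as $\spc_{1-g}$ rather than $\spc_{g-1}$, and its diagram chase is slightly leaner (it needs only that $i\colon\hf(-S^3)\to\hfp(-S^3)$ is nonzero on the one-dimensional $\hf(-S^3)$, not the surjectivity of $i_1$ or the descent of $\hat F$ --- and note that $U$ acts on $\hfp$, not on the hat group you call the ``top-left,'' which for a fibered knot is $\F_0\oplus\F_1$ rather than $\F$).
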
 
\begin{proof}
%For this, we first claim that $y_{g-1}$ is nonzero. Suppose, for a contradiction, that $y_{g-1}=0$. Let $x$ be the generator of \[ \hf_1(S^3_0(K),2g-2)\cong \F.\]  In particular, $x\neq y_{g-1}$ and we can conclude, by the  argument used in the proof of Proposition \ref{prop:key-implies-fibered}, that \[F_{V_n}(x) \in \hf_1(S^3_n(K))\] is nonzero, which contradicts our assumption that $S^3_n(K)$ is an L-space. We have thus shown that $y_{g-1}$ is nonzero, 
%Corollary~\ref{cor:hatfibered} says that $\hf(S^3_0(K),\spc_{g-1})$ is nonzero, so if $K$ is an L-space knot then $y_{g-1} \neq 0$ by Proposition~\ref{prop:hf-zero-surgery}.  
We already have seen in \eqref{eq:top-spc-dim} that $\hf(S^3_0(K),\spc_{g-1})$ is nonzero, hence $y_{g-1} \neq 0$ by Proposition~\ref{prop:hf-zero-surgery}.
Equivalently, the map
\begin{equation}\label{eqn:x0map}
\hf(S^3)\to\hf(S^3_0(K),\spc_{g-1})
\end{equation}
induced by $X_0$ is nonzero. Now, we can also view $X_0$ as a cobordism
\[ X_0:-S^3_0(K)\to-S^3, \]
in which case the induced map
\[ \hf(-S^3_0(K),\spc_{1-g})\to\hf(-S^3) \]
is dual to that in \eqref{eqn:x0map}. In particular, this map is also nonzero. The commutativity of 
\begin{equation*} \xymatrix@C=20pt@R=30pt{
\hf(-S^3_0(K),\spc_{1-g}) \ar[r]^-{i} \ar[d]&\hfp(-S^3_0(K),\spc_{1-g})\ar[d]  \\
\hf(-S^3) \ar[r]^-{i} &\hfp(-S^3), \\
} \end{equation*}
where the vertical maps are those induced by $X_0$, together with the facts that $\hf(-S^3)\cong \F$ and  the bottom horizontal map is nonzero, implies that the rightmost vertical map
\begin{equation}\label{eqn:x0map2}
\hfp(-S^3_0(K),\spc_{1-g})\to\hfp(-S^3)
\end{equation}
is nonzero as well. But Proposition~\ref{prop:fibered} says that $K$ is fibered, hence
\[\hfp(-S^3_0(K),\spc_{1-g})\cong \F,\]
and the image of its generator under the map in \eqref{eqn:x0map2} is the contact invariant $c^+(\xi_K)$ \cite{osz-contact}, where $\xi_K$ is the contact structure corresponding to $K$. Thus, $c^+(\xi_K)$ is nonzero, which implies that $\xi_K$ is the tight contact structure on $S^3$. It follows that $K$ is strongly quasipositive, by work of Hedden \cite[Proposition~2.1]{hedden-positivity}.
\end{proof}

We will now use the fact that L-space knots are strongly quasipositive to determine the range of L-space slopes for any such knot.  We begin with the following general lemma.

\begin{lemma} \label{lem:dim-from-ker-U}
Let $Y$ be a rational homology sphere with $|H_1(Y;\Z)| = n$.  Suppose that
\[ \ker\left(U: \hfp(Y) \to \hfp(Y)\right) \]
has dimension $n+k$.  Then $\dim \hf(Y) = n+2k$.
\end{lemma}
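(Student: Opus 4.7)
The plan is to extract a short exact sequence from the exact triangle in \eqref{eqn:trianglehf}, and then to use the standard structure theorem for $\hfp$ of a rational homology sphere to compute $\dim\ker U$ and $\dim\coker U$ separately.

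First, I would expand \eqref{eqn:trianglehf} into the long exact sequence
\[ \cdots \xrightarrow{U} \hfp(Y) \xrightarrow{j} \hf(Y) \xrightarrow{i} \hfp(Y) \xrightarrow{U} \hfp(Y) \xrightarrow{j} \cdots. \]
Exactness at the three consecutive terms $\hfp(Y) \to \hf(Y) \to \hfp(Y) \to \hfp(Y)$ gives $\im(j) = \ker(i)$ and $\im(i) = \ker(U)$, and also $\im(j) \cong \hfp(Y)/\ker(j) = \hfp(Y)/\im(U) = \coker(U)$. Thus we get a short exact sequence
\[ 0 \to \coker(U) \to \hf(Y) \to \ker(U) \to 0, \]
which reduces the statement to the identity $\dim \coker(U) + \dim\ker(U) = n + 2k$.

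Next I would use the structure theorem for $\hfp$ of a rational homology sphere: for each $\spc \in \Sc(Y)$, there is a splitting of $\F[U]$-modules
\[ \hfp(Y,\spc) \cong \cT^+_{d(\spc)} \oplus \hfpred(Y,\spc), \]
where $\cT^+ = \F[U,U^{-1}]/U\cdot\F[U]$ and $\hfpred(Y,\spc)$ is a finite-dimensional $U$-torsion $\F[U]$-module. On each $\cT^+$ summand, $U$ is surjective, so it contributes nothing to $\coker(U)$, while its kernel is a single copy of $\F$. On each summand $\hfpred(Y,\spc)$, which decomposes as a direct sum of cyclic modules $\F[U]/U^{d_i}$, both $\ker(U)$ and $\coker(U)$ have dimension equal to the number of cyclic summands. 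Letting $s$ denote the total number of cyclic summands appearing across all $\spc$, and summing over the $n$ values of $\spc$, I get
\[ \dim \ker(U) = n + s, \qquad \dim \coker(U) = s. \]

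Finally, the hypothesis $\dim\ker(U) = n + k$ forces $s = k$, so $\dim \coker(U) = k$, and the short exact sequence above yields
\[ \dim \hf(Y) = k + (n+k) = n + 2k, \]
as desired. There is no real obstacle here; the only point requiring care is invoking the $\cT^+ \oplus \hfpred$ splitting and the equality $\dim\ker(U) = \dim\coker(U)$ on finite-dimensional $U$-torsion $\F[U]$-modules, both of which are standard.
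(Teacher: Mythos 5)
Your proof is correct and follows essentially the same route as the paper: both extract the short exact sequence $0 \to \coker(U) \to \hf(Y) \to \ker(U) \to 0$ from the exact triangle and then use the splitting $\hfp(Y) \cong (\cT^+)^{\oplus n} \oplus \hfred(Y)$ to count $\dim\ker(U)$ and $\dim\coker(U)$ via the cyclic torsion summands. The only cosmetic difference is that you organize the splitting one $\Sc$ structure at a time rather than all at once.
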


\begin{proof}
The exact triangle \eqref{eqn:trianglehf} involving the $U$-action on $\hfp(Y)$ produces a short exact sequence
\[ 0 \to \coker(U) \to \hf(Y) \to \ker(U) \to 0. \]
Thus it will suffice to show that $\dim \coker(U) = k$.

Since each $\Sc$ structure on $Y$ is torsion, we have a short exact sequence
\[ 0 \to (\cT^+)^{\oplus n} \to \hfp(Y) \xrightarrow{\pi} \hfred(Y) \to 0, \]
of $\F[U]$-modules, where $\cT^+ \cong \F[U,U^{-1}] / UF[U]$.  The quotient $\hfred(Y)$ is defined as $\hfp(Y)/\Img(U^d)$ for $d \gg 0$; it is finitely generated over $\F[U]$ and over $\F$, and every element is $U$-torsion, so it has a decomposition
\[ \hfred(Y) \cong \bigoplus_{i=1}^r \F[U] / \langle U^{n_i} \rangle, \]
with each $n_i \geq 1$.  Moreover this sequence can be shown to split, so that
\[ \hfp(Y) \cong (\cT^+)^{\oplus n} \oplus \bigoplus_{i=1}^r \F[U]/\langle U^{n_i}\rangle. \]
But then it is clear that $\ker(U) \cong \F^{n+r}$, so that $r=k$, and then that $\coker(U) \cong \F^r = \F^k$, and the lemma follows immediately.
\end{proof}

The following proposition completes our proof of Theorem~\ref{thm:main}.  The proof below is partly inspired by the work of Lidman, Pinz\'on-Caicedo, and Scaduto in \cite{lpcs}.
\begin{proposition} \label{prop:l-space-range}
If $K$ has genus $g \geq 1$ and $S^3_n(K)$ is an L-space for some positive integer $n$, then $S^3_n(K)$ is an L-space for an arbitrary integer $n$ if and only if $n \geq 2g-1$.
\end{proposition}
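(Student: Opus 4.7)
The plan is to translate the L-space condition into a combinatorial condition on which cobordism maps $F_{X_k}$ vanish, reduce the full claim to showing $F_{X_{2g-2}}\neq 0$, and invoke a contact-class argument analogous to the strong quasipositivity proof; integer slopes $n\leq 0$ will be handled separately. Combining the dimension recursion \eqref{eq:rank-change} with Proposition~\ref{prop:hf-zero-surgery} gives
\[\dim\hf(S^3_n(K))=2s+n-2b_n,\]
where $s=\#\{j:y_j\neq 0\}$ and $b_n=\#\{k\in\{0,\ldots,n-1\}:F_{X_k}\neq 0\}$, so $S^3_n(K)$ is an L-space if and only if $b_n=s$. By the adjunction inequality, $F_{X_k}=0$ for all $k\geq 2g-1$, hence $b_n$ stabilizes at $b_{2g-1}$; combined with the monotonicity of L-space slopes \cite[Proposition~2.1]{osz-lens}, the hypothesis of an L-space slope $N>0$ forces $b_{2g-1}=s$, so every integer $n\geq 2g-1$ is an L-space slope.

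The main step is to rule out L-space slopes with $0<n<2g-1$, i.e.\ to show $F_{X_{2g-2}}\neq 0$. At $k=2g-2$ the adjunction inequality leaves only the spin-c structure $\spt_{2g-2,g-1}$ as a potentially nonzero contribution, so it suffices to show $x_{2g-2,g-1}=F_{X_{2g-2},\spt_{2g-2,g-1}}(\mathbf{1})\neq 0$; applying Lemma~\ref{lem:compute-F_V} with $i=g-1$ collapses the sum to a single term and gives $x_{2g-2,g-1}=F_{V_{2g-2}}(y_{g-1})$. Thus the task becomes showing $y_{g-1}\notin\ker F_{V_{2g-2}}$, strengthening the containment in Proposition~\ref{prop:key-kernel}. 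The natural approach, following the strong quasipositivity proof, is to dualize the factorization $\bar X_{2g-2}=\bar X_0\circ\bar V_{2g-2}:-S^3_{2g-2}(K)\to -S^3$ and use the commutative diagram relating $\hf$ and $\hfp$ cobordism maps together with the identification of the image of the generator of $\hfp(-S^3_0(K),\spc_{1-g})\cong\F$ under $\bar X_0^+$ with the nonzero contact class $c^+(\xi_K)$. This is the main technical obstacle, as it requires carefully tracking spin-c structures and blow-up contributions through the composition.

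Finally, integer slopes $n\leq 0$: $S^3_0(K)$ has $b_1=1$, so it is not a rational homology sphere. For $n<0$, $S^3_n(K)\cong -S^3_{|n|}(\overline K)$, and the L-space property is invariant under orientation reversal, so an L-space structure there would make $\overline K$ an L-space knot with positive integer slope; by the already-established fibered and strongly quasipositive parts of Theorem~\ref{thm:main} applied to $\overline K$, this would force $\overline K$ to be strongly quasipositive. Since $K$ is nontrivial and strongly quasipositive, the Ozsv\'ath--Szab\'o concordance invariant satisfies $\tau(K)=g(K)>0$, so $\tau(\overline K)=-\tau(K)<0$, contradicting the strong quasipositivity of $\overline K$.
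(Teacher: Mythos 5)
Your reduction of the hard direction to the single non-vanishing statement $F_{X_{2g-2}}\neq 0$, equivalently $F_{V_{2g-2}}(y_{g-1})\neq 0$, is correct and clean, but the proposal stops exactly where the real content begins: you name the dualization-plus-contact-class argument as ``the main technical obstacle'' and do not carry it out, and as sketched it cannot work. The identification of the image of the generator of $\hfp(-S^3_0(K),\spc_{1-g})$ under the map induced by $X_0$ with $c^+(\xi_K)$ only certifies that $y_{g-1}\neq 0$, which is already known from the strong quasipositivity proof; it says nothing about whether $y_{g-1}$ survives $F_{V_{2g-2}}$. Proposition~\ref{prop:key-kernel} gives $\ker F_{V_{2g-2}}\subset \Span_\F(y_{1-g},\dots,y_{g-1})$ and that containment is perfectly consistent with $y_{g-1}$ lying in the kernel (indeed for $k\geq 2g-1$ the kernel \emph{is} the whole span), so some genuinely new input is required, and the commutative square relating $\hf$ and $\hfp$ for $\bar X_0$ alone does not supply it. (Also, $\bar X_{2g-2}\neq \bar X_0\circ \bar V_{2g-2}$: the composition is the reverse of $Z_{2g-2}\cong X_{2g-2}\#(2g-2)\cpbar$, though the blow-up formula would absorb this.)

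The paper closes this gap by an entirely different mechanism: strong quasipositivity gives $\maxsl(K)=2g-1$, which yields $n+2g-1$ Legendrian representatives of $K$ with $\ttb=1-n$ and distinct rotation numbers for $n\gg 0$; Legendrian surgery produces that many Stein fillings of $S^3_{-n}(K)$ distinguished by $c_1$, Plamenevskaya's theorem makes the corresponding contact classes linearly independent in $\ker(U)\subset\hfp(-S^3_{-n}(K))$, and Lemma~\ref{lem:dim-from-ker-U} plus repeated use of the exact triangle \eqref{eqn:triangle} propagates the resulting lower bound $\dim\hf(S^3_{-n}(K))\geq n+4g-2$ up to all integer slopes $m<2g-1$. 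Two secondary problems with your write-up: your combinatorial setup invokes Proposition~\ref{prop:hf-zero-surgery}, which is proved under the standing assumption $g\geq 2$, whereas Proposition~\ref{prop:l-space-range} must hold for $g\geq 1$ (the genus-one reduction relies on this); and your treatment of $n<0$ via $\tau(K)=g(K)$ imports the knot-filtration machinery that this paper is expressly written to avoid, while the paper's dimension bounds on negative surgeries handle that case for free.
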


\begin{proof}
Since $K$ is strongly quasipositive, its maximal self-linking number is $\maxsl(K) = 2g-1$.  We take a Legendrian representative $\Lambda$ of $K$ in the standard contact $S^3$ with classical invariants
\[ (\ttb(\Lambda), r(\Lambda)) = (\tau_0,r_0), \qquad \tau_0 - r_0 = 2g-1, \]
and for $n \geq 1-\tau_0$, we can positively stabilize this $k$ times and negatively stabilize it $\tau_0+n-1-k$ times to get a Legendrian representative with
\[ (\ttb,r) = (1-n, 2-2g-n+2k), \qquad 0 \leq k \leq \tau_0+n-1. \]
For odd $n \gg 0$, these values of $r$ include every positive odd number between $1$ and $n+2g-2$.

Fixing such a large value of $n$, we perform Legendrian surgery on these knots $\Lambda_i$ with
\[ (\ttb(\Lambda_i), r(\Lambda_i)) = (1-n, 2i-1), \qquad 1 \leq i \leq \frac{n+2g-1}{2}, \]
to get contact structures
\[ \xi_1,\dots,\xi_{(n+2g-1)/2} \]
on $S^3_{-n}(K)$.  If $X_{-n}(K)$ is the trace of this $-n$-surgery, and $\hat\Sigma \subset X_{-n}(K)$ the union of a Seifert surface for $K$ with the core of the 2-handle, then each $\xi_i$ admits a Stein filling $(X_{-n}(K),J_i)$ with $\langle c_1(J_i),[\hat\Sigma]\rangle = r(\Lambda_i) = 2i-1$.  We can also take contact structures
\[ \bar{\xi}_i = T(S^3_{-n}(K)) \cap \bar{J}_i T(S^3_{-n}(K)), \qquad 1 \leq i \leq \frac{n+2g-1}{2}, \]
which are filled by $X_{-n}(K)$ with the conjugate Stein structure $\bar{J}_i$ for each $i$.  These satisfy $\langle c_1(\bar{J}_i), [\hat\Sigma]\rangle = -(2i-1)$, so we have exhibited $n+2g-1$ Stein structures
\[ J_1,J_2,\dots,J_{(n+2g-1)/2}, \bar{J}_1,\bar{J}_2, \bar{J}_{(n+2g-1)/2} \]
on $X_{-n}(K)$ which are all distinguished by their first Chern classes.

A theorem of Plamenevskaya \cite{plamenevskaya} now tells us that the corresponding contact invariants
\[ c^+(\xi_1), \dots, c^+(\xi_{(n+2g-1)/2}), c^+(\bar{\xi}_1), \dots, c^+(\bar{\xi}_{(n+2g-1)/2}) \in \hfp(-S^3_{-n}(K)) \]
are linearly independent.  These elements lie in $\ker(U)$, as can be seen, for example, from the fact that they are by defined by maps of the form \eqref{eqn:x0map2} whose domains have trivial $U$ action.  Thus
\[ \dim \ker(U) \geq n+2g-1, \]
and it follows from Lemma~\ref{lem:dim-from-ker-U} that
\[ \dim \hf(S^3_{-n}(Y)) = \dim \hf(-S^3_{-n}(Y)) \geq n+4g-2. \]
This same argument applies for any larger odd value of $n$ as well, and the conclusion also holds for even values of $n$ after making only cosmetic changes to the argument, so that $S^3_{-m}(Y)$ cannot be an L-space for any $m \geq n$.

We now repeatedly apply the surgery exact triangle \eqref{eqn:triangle} to see that
\[ \dim \hf(S^3_{-m}(Y)) \geq m + 4g-2, \qquad 0 \leq m \leq n, \]
and then that
\[ \dim \hf(S^3_m(Y)) \geq 4g-2-m \geq m+2, \qquad 0 \leq m \leq 2g-2. \]
Thus $S^3_m(Y)$ cannot be an L-space for any integer $m < 2g(K)-1$.  On the other hand, equation \eqref{eq:rank-change} says that
\[ \dim \hf(S^3_{2g-1+n}(K)) = \dim\hf(S^3_{2g-1}(K)) + n \]
for all $n \geq 0$, since the maps $F_{X_{2g-1}},\dots,F_{X_{2g-2+n}}$ are all zero by the adjunction inequality.  Thus $S^3_{2g-1+n}(K)$ is an L-space if and only if $S^3_{2g-1}(K)$ is, and this completes the proof.
\end{proof}

\noindent {\bf Acknowledgments.} We thank Jen Hom and Tye Lidman for helpful conversations.  JAB was supported by NSF CAREER Grant DMS-1454865.

%%%%%%%%%%%%%%%%%%%%%%%%%%%%%%%%%%%%%%%%%%%%%%%%%%%%%%%
\bibliographystyle{alpha}
\bibliography{References}

\end{document}